\documentclass[a4paper,12pt]{article}

\usepackage{fullpage}
\usepackage{mathtools}
\usepackage[utf8]{inputenc}
\usepackage{amsthm,amsmath,amstext, amssymb, xcolor, scrextend, tikz, multirow, enumerate, wasysym, makecell, tabularx, hyperref,  pbox, lipsum,csquotes,  tikz-cd, multicol, calc, accents, stackengine, kpfonts}
\usepackage[all]{xy}
\usepackage{xcolor}
\usepackage{palatino}
\usepackage{cancel}
\usepackage{authblk}
\usepackage{setspace}

\usepackage{mathtools}

\DeclareSymbolFont{AMSb}{U}{msb}{m}{n}
\DeclareMathSymbol{\N}{\mathbin}{AMSb}{"4E}
\DeclareMathSymbol{\Z}{\mathbin}{AMSb}{"5A}
\DeclareMathSymbol{\R}{\mathbin}{AMSb}{"52}
\DeclareMathSymbol{\Q}{\mathbin}{AMSb}{"51}
\DeclareMathSymbol{\I}{\mathbin}{AMSb}{"49}
\DeclareMathSymbol{\C}{\mathbin}{AMSb}{"43}

\DeclareFontFamily{U}{mathx}{\hyphenchar\font45}
\DeclareFontShape{U}{mathx}{m}{n}{<-> mathx10}{}
\DeclareSymbolFont{mathx}{U}{mathx}{m}{n}
\DeclareMathAccent{\widebar}{0}{mathx}{"73}

\def\lim{\mathop{\rm lim}\nolimits}

\def\Ext{\mathrm{Ext}}
\def\Hom{\mathrm{Hom}}

\def\Ker{\mathrm{Ker}}
\def\Ker{\mathrm{Ker}}
\def\ker{\mathop{\rm Ker}\nolimits}

\newcommand{\res}[1]{\hspace{-0.6mm}\downarrow_{\hspace{-0.25mm}{#1}}}

\newcommand{\F}{\mathbb{F}}

\DeclareMathOperator{\Kern}{Ker}

\usetikzlibrary{decorations.shapes}
\usetikzlibrary{decorations.text}
\usetikzlibrary{calc}

\hypersetup{colorlinks=true,linkcolor=blue,citecolor=blue, filecolor=blue,urlcolor=blue}

\overfullrule=6pt
\onehalfspacing
\newtheorem{theorem}{Theorem}[section]
\newtheorem{lemma}[theorem]{Lemma}

\title{Permutation modules over cyclic $p$-groups}
\author[a]{Marlon Estanislau}
\affil[a]{Universidade Federal de Minas Gerais, Belo Horizonte, MG, Brazil}

\begin{document}

\maketitle

\footnotetext[1]{\textit{Email addresses:}  mestanislau@ufmg.br}
\footnotetext[2]{Research is part of the author's PhD thesis at the Federal University of Minas Gerais, supervised by 
John William MacQuarrie.}

\begin{abstract}

Let $G$ be a cyclic $p$-group for some prime number $p>0$ and let $R$ be a complete discrete valuation ring in mixed characteristic. In this paper, we present a generalization of two results that characterize $RG$-permutation modules, extending previous work by B. Torrecillas and Th. Weigel. Their original results were established under the assumption that $ p$ is unramified in $R$, whereas we extend their characterization to the case where $p$ may be ramified. Unlike prior approaches, our proofs rely solely on fundamental facts from group cohomology and a version of Weiss' Theorem, avoiding deeper categorical techniques. 

\end{abstract}



\vspace{1.5mm}
\textbf{keywords}:
finite $p$-groups, permutation modules, lattices, $p$-adic representations.

2020 MSC: code  20c11.

\section{Introduction}
Throughout this article, $G$ is a  cyclic $p$-group for some prime number $p>0$, $N$ is a normal subgroup of $G$ (denoted $N\lhd G$),  and $R$ is  a \emph{complete discrete valuation ring in mixed characteristic} (that is, with fraction field of characteristic $0$ and 
residue field of characteristic $p$). Let $A$ denote a quotient ring of $R$. In this work, we will deal with    $AG$-\emph{lattices}, which are  $AG$-modules that are $A$-free and finitely  generated. 

  Of particular importance are the $AG$-\emph{permutation modules}: $AG$-lattices having an $A$-basis that is preserved set-wise by the multiplication of $G$.  Such lattices are extremely well behaved and serve as a natural generalization of free modules.  However, given a lattice, it may not be clear whether it is a permutation module. In 1988, A. Weiss provided the following criterion for permutation modules, which is of key importance, having applications in number theory \cite{WeissAnnals} and block theory \cite{Puig},  to name a few:
 \begin{theorem}(\cite[Theorem 2]{WeissAnnals})\label{t1}
      Let $G$ be a finite $p$-group and let $\Z_p$ denote the ring of $p$-adic integers. Let $U$ be a $\Z_pG$-lattice and  suppose there is a normal subgroup  $N$ of $G$ such that:
     \begin{enumerate}
    \item the module $U$ restricted to $N$ is  $\Z_pN$-free,
    \item the submodule of $N$-fixed points $U^N$ is a $\Z_p[G/N]$-permutation module.
   
\end{enumerate}
 Then $U$ itself is a $\Z_pG$-permutation module.
 \end{theorem}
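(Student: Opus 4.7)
The plan is to proceed by induction on $|N|$. The base case $N = 1$ is immediate, since hypothesis (2) then says that $U = U^N$ is already a $\Z_pG$-permutation module.

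For the inductive step, since $G$ is a $p$-group and $N \lhd G$ is nontrivial, $N \cap Z(G)$ is nontrivial, so I would choose a central subgroup $C \leq N$ of order $p$. Setting $\widebar G = G/C$ and $\widebar N = N/C$, I consider the $\Z_p \widebar G$-lattice $U^C$. My strategy is to verify that $(\widebar G, \widebar N)$ acting on $U^C$ satisfies both hypotheses of the theorem, apply the induction hypothesis to conclude that $U^C$ is a $\Z_p \widebar G$-permutation module, and then lift this structure back to a $G$-permutation basis of $U$.

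For the hypotheses on $U^C$: the isomorphism $\widebar G / \widebar N \cong G/N$ identifies $(U^C)^{\widebar N}$ with $U^N$, which is a permutation module by assumption. For freeness of $U^C\res{\widebar N}$ over $\Z_p\widebar N$, I would write $U\res N \cong \Z_pN \otimes_{\Z_p} W$ for some $\Z_p$-lattice $W$; since $C \leq N$ acts only through the $\Z_pN$-factor, taking $C$-fixed points yields $U^C \cong (\Z_pN)^C \otimes W$ as $\Z_p\widebar N$-modules, and a direct computation shows $(\Z_pN)^C$ is $\Z_p\widebar N$-free of rank $1$, generated by the norm sums along a transversal of $C$ in $N$.

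The main obstacle is the lifting step. By induction $U^C$ admits a $\widebar G$-permutation basis $\{y_j\}_{j \in J}$, and because $U\res C$ is $\Z_pC$-free the norm map provides a $\Z_p\widebar G$-isomorphism $U_C \xrightarrow{\sim} U^C$. Pulling back $\{y_j\}$ gives a $\Z_p$-basis of $U_C$, and any set of lifts $\{\tilde y_j\}$ in $U$ is automatically a $\Z_pC$-basis. The delicate point is to choose the lifts so that $G$ permutes them up to factors in $C$, that is, $g \tilde y_j = c \cdot \tilde y_k$ for some $c \in C$ and $k \in J$. I expect this to require resolving a $1$-cocycle obstruction along the $\widebar G$-orbits on $J$, with the vanishing of the Tate cohomology of $C$ on $U$ (equivalent to $\Z_pC$-freeness) supplying exactly the adjustments needed to kill the obstruction. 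This cohomological bookkeeping is the technical heart of Weiss's argument.
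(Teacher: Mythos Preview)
The paper does not contain a proof of this statement: Theorem~\ref{t1} is quoted from \cite{WeissAnnals} purely as background and motivation, and the paper neither reproves it nor sketches an argument for it. The paper's own contributions are Theorems~\ref{myresult} and~\ref{corollo}, whose proofs rely on the generalization Theorem~\ref{tjpp} (also cited, not proved here) as a black box. So there is no ``paper's own proof'' to compare your proposal against.

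That said, your outline is essentially the skeleton of Weiss's original argument: induct by peeling off a central subgroup $C$ of order $p$, verify that $U^C$ inherits the hypotheses over $G/C$, and then lift a permutation basis of $U^C$ back to $U$ using that $U\res{C}$ is free. You have correctly flagged the lifting step as the crux and correctly identified that the vanishing of Tate cohomology $\widehat{H}^*(C,U)$ (equivalent to $\Z_pC$-freeness) is what makes the obstruction manageable. What your sketch does not yet do is actually carry out that step: the passage from ``$G$ permutes the images in $U_C$'' to ``lifts can be chosen so that $G$ permutes them on the nose, not merely up to $C$'' is genuinely delicate, and Weiss's argument at this point is not a routine $H^1$ computation but a careful orbit-by-orbit construction. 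As a plan it is sound; as a proof it stops exactly where the real work begins.
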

 Later in 1993, Weiss generalized this result for finite extensions of $\Z_p$ and recently J.W. MacQuarrie, P. Symonds and P.A. Zalesskii generalized Weiss' Theorem to the following result:
\begin{theorem}(Special case of \cite[Theorem 1.2]{MACQUARRIE2020106925})\label{tjpp}
     Let $R$ be a complete discrete valuation ring in mixed characteristic with 
residue field of characteristic $p$, let $G$ be a finite $p$-group and let $U$ be an $RG$-lattice. Suppose there is a normal subgroup $N$ of $G$ such that:
\begin{enumerate}
    \item the module $U$ restricted to $N$ is  $RN$-free,
    \item the submodule of $N$-fixed points $U^N$ is an $R[G/N]$-permutation module.
   
\end{enumerate}
 Then $U$ itself is an $RG$-permutation module.
\end{theorem}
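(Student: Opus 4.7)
The plan is to proceed by induction on $|N|$. The base case $|N|=1$ is immediate: hypothesis (2) gives $U = U^N$ as an $R[G/N] = RG$-permutation module. For the inductive step, since any nontrivial normal subgroup of a $p$-group meets the center, I would choose a central subgroup $M \leq N$ of order $p$, then verify that $U^M$ is an $R[G/M]$-lattice satisfying the hypotheses with respect to $N/M \lhd G/M$. Condition (1) for $U^M$ follows from the $R$-module decomposition $RN \cong R[N/M] \otimes_R RM$, which shows that $M$-invariants of a free $RN$-module are free over $R[N/M]$. Condition (2) is automatic: $(U^M)^{N/M} = U^N$ is unchanged. By induction on $|N/M| < |N|$, $U^M$ is $R[G/M]$-permutation, reducing the problem to the case $|N| = p$ with $N$ central in $G$.

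In this core case, write $N = \langle \sigma \rangle$. The trace map $\tau : U \to U^N$, $u \mapsto \sum_{i=0}^{p-1} \sigma^i u$, is surjective with kernel $(1-\sigma)U$ by freeness of $U|_N$. Writing the given permutation decomposition $U^N = \bigoplus_i R[G/H_i]$ with each summand generated by a vector $e_i$ of $G$-stabilizer $H_i \supseteq N$, the plan is, for each $i$, to find a lift $\tilde e_i \in U$ with $\tau(\tilde e_i) = e_i$ that is fixed by a complement $K_i$ of $N$ in $H_i$ (i.e. $K_i N = H_i$ and $K_i \cap N = 1$). The $G$-orbits of the $\tilde e_i$ then form an $RG$-permutation basis of $U$, with matching total rank $\sum_i [G:K_i] = p \sum_i [G:H_i] = p \cdot \mathrm{rank}_R(U^N) = \mathrm{rank}_R(U)$. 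Constructing such a $\tilde e_i$ is a cocycle argument: any initial lift gives a $1$-cocycle $K_i \to (1-\sigma)U$, and modifying the lift within $\tau^{-1}(e_i)$ corresponds to coboundaries, so the obstruction lies in $H^1(K_i, (1-\sigma)U)$.

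The main obstacle is two-fold. First, one must show that every $H_i$ in the decomposition admits a complement of $N$; this is forced by the desired conclusion but does not follow naively from (1)--(2) alone, since if some $H_i$ is a non-split central extension of $H_i/N$ by $N$ then no complement exists, and one must argue that such $H_i$ cannot arise, typically via a separate cohomological analysis on the lattice $U$ itself. Second, the vanishing of $H^1(K_i, (1-\sigma)U)$ in the ramified mixed-characteristic setting is subtler than in the unramified case, because the structure theory of $RN$-lattices over a ramified complete DVR is richer than the classical Heller--Reiner picture over $\Z_p$. The approach of \cite{MACQUARRIE2020106925} sidesteps both issues by working in the pseudocompact category over profinite group algebras, where limit and exactness arguments supply flexibility absent in a direct finite-level construction.
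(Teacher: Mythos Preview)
The paper does not prove this theorem: it is quoted verbatim as a special case of \cite[Theorem~1.2]{MACQUARRIE2020106925} and used thereafter as a black box (in the proof of Theorem~\ref{myresult}). So there is no ``paper's own proof'' to compare against; the relevant question is whether your outline stands on its own.

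It does not, and you say so yourself. Your reduction to $|N|=p$ central is fine, but the core step --- lifting each permutation generator $e_i\in U^N$ to a $K_i$-fixed $\tilde e_i\in U$ for some complement $K_i$ of $N$ in $H_i$ --- presupposes that such a complement exists. In the very setting this paper cares about, $G$ cyclic, this is maximally false: the unique subgroup $N$ of order $p$ sits inside every nontrivial subgroup, so $N$ has a complement in $H_i$ only when $H_i=N$. Thus your scheme would require knowing in advance that $U^N$ is $R[G/N]$-\emph{free}, not merely permutation; but for cyclic $G$ that statement is equivalent (via Krull--Schmidt on permutation modules) to the conclusion that $U$ is $RG$-free, i.e.\ to the theorem itself. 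The second obstacle you flag --- controlling $H^1(K_i,(1-\sigma)U)$ over a possibly ramified $R$ --- is likewise left open. Your final paragraph correctly identifies that the cited source avoids both issues by passing to the pseudocompact/profinite framework, but that is a pointer to someone else's argument, not a proof. As written, the proposal is a plan with its two load-bearing steps explicitly missing.
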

  This last result has been applied  in the study of profinite groups (cf.\cite{P}) and calculation of Picard groups of blocks of finite groups (cf.\cite{BOLTJE202070}, \cite{Florian}, \cite{LIVESEY202194}) and will be fundamental for us.

 Weiss' Theorem cannot be applied to lattices for which $U\res{N}$ is not free.  If $U$ is a permutation module, then of course $U\res{N}$ is necessarily a permutation module, but, in general, a sensible analogous statement to Weiss's Theorem demanding only that $U\res{N}$ be a permutation module seems far from obvious. In 2013, B. Torrecillas and Th. Weigel gave the following generalization of Weiss's Theorem  when $G$ is cyclic and $pR$ is maximal in $R$: 
   \begin{theorem}(\cite[Proposition 6.12]{TORRECILLAS2013533})\label{Torr.Wei}
    Let $R$ be a complete discrete valuation ring in mixed characteristic with maximal ideal $pR$, let $G$ be a cyclic $p$-group and  $U$ be an $RG$ lattice. Suppose  there is a normal subgroup  $N$  of $G$ such that:
    \begin{enumerate}
        \item the lattice $U$ restricted to $N$ is an $RN$-permutation module,
        \item  the submodule of $N$-fixed points $U^N$  is an $R[G/N]$-permutation module.
    \end{enumerate}
    Then $U$ itself is an $RG$-permutation module. 
  \end{theorem}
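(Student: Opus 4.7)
The plan is to prove the theorem by induction on $|G|$, using Theorem \ref{tjpp} as the key external input. The base cases are immediate: when $|G| = 1$, or $N = 1$ (so $U = U^N$ is $RG$-permutation by hypothesis), or $N = G$ (so $U = U\res{N}$ is $RG$-permutation by hypothesis).

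For the inductive step, assume $1 < N < G$ and let $Z$ denote the unique subgroup of $G$ of order $p$. Since $G$ is a cyclic $p$-group and $N$ is nontrivial, $Z \leq N$. The first task is to show $U^Z$ is an $R[G/Z]$-permutation module, by applying the inductive hypothesis to $U^Z$ viewed as an $R[G/Z]$-lattice with normal subgroup $N/Z \lhd G/Z$. The condition $(U^Z)^{N/Z} = U^N$ being $R[G/N]$-permutation is immediate from the hypothesis. For the other condition, note $U^Z \res{N/Z} = (U\res{N})^Z$; a direct computation on the decomposition $U\res{N} = \bigoplus_i R[N/H_i]$ shows each summand contributes either $R[(N/Z)/(H_i/Z)]$ when $Z \leq H_i$, or $R[N/Z]$ when $H_i = 1$, producing an $R[N/Z]$-permutation module.

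If $U\res{N}$ is $RN$-free, Theorem \ref{tjpp} applies directly with normal subgroup $N$ to finish. Otherwise, $U\res{Z}$ is not $RZ$-free, so Theorem \ref{tjpp} cannot be invoked with $Z$ as the subgroup, and we must first split off a free $RG$-summand. Writing $U^Z = R[G/Z]^r \oplus T$ with $T$ having no $R[G/Z]$ summand, I would lift generators of $R[G/Z]^r$ to elements $v_1, \ldots, v_r \in U$ via the norm element $\hat{Z} \in RZ$, using a cohomological comparison involving $\hat{H}^0(Z, U)$ and the permutation structure of $U\res{N}$. A rank count supports this: the number of $RN$-free summands of $U\res{N}$ equals $r[G:N]$, matching what the expected $RG$-free summands would contribute upon restriction to $N$.

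The main obstacle will be showing that $V := \sum_{i=1}^r RG \cdot v_i$ is $RG$-free of rank $r$ and a direct summand of $U$, with complement $W$ on which $Z$ acts trivially. Once this is established, $W$ becomes an $R[G/Z]$-lattice satisfying the theorem's hypotheses with respect to $N/Z \lhd G/Z$, so by the inductive hypothesis $W$ is $R[G/Z]$-permutation; combining with $V$ yields $U = V \oplus W$ as $RG$-permutation. The splitting step is the technical heart of the argument, and I anticipate invoking Theorem \ref{tjpp} on $V$ together with a Nakayama-type argument modulo $p$.
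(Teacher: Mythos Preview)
Your inductive scaffold and the reduction to the order-$p$ subgroup $Z$ are the same as the paper's, but the decomposition you propose at the core step goes in the wrong direction and does not exist in general.

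The claimed rank identity is false. Free $R[G/Z]$-summands of $U^{Z}$ do not arise only from $RN$-free summands of $U\!\res{N}$; any summand $R[N/H_i]$ with $H_i\in\{1,Z\}$ contributes a free $R[N/Z]$ upon taking $Z$-fixed points, so one only gets
\[
\#\{i:H_i=1\}\;+\;\#\{i:H_i=Z\}\;=\;r\,[G:N],
\]
not $\#\{i:H_i=1\}=r[G:N]$. Concretely, take $|G|=p^{2}$, $N=Z$, and $U=R[G/Z]$. Then $U\!\res{N}=R^{p}$ is $RN$-permutation with \emph{no} free summand, while $U^{Z}=R[G/Z]$ is free of rank $r=1$. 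Your lifting step asks for $v_1\in U$ with $\widehat{Z}\,v_1$ generating $U^{Z}$, but $\widehat{Z}$ acts as multiplication by $p$ on $U$, so $\widehat{Z}\,U=pU$ misses every generator; and there is no $RG$-free summand to split off since $\operatorname{rank}_R U=p<p^{2}$. Thus the splitting $U=V\oplus W$ with $V\cong RG^{r}$ and $Z$ trivial on $W$ simply fails here, even though $U$ is visibly a permutation module.

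The paper (proving the more general Theorem~\ref{myresult}, of which Theorem~\ref{Torr.Wei} is the unramified special case) peels off the \emph{other} end: it locates an $RG$-submodule $X\leqslant U^{N}$ on which $G$ acts trivially (or through a proper quotient) and shows, via Lemmas~\ref{G/M-free} and~\ref{decomposi} together with a Nakayama argument, that $X$ is simultaneously an $RN$-complement inside $U\!\res{N}$. Then $U/X$ has $(U/X)\!\res{N}$ free, so Theorem~\ref{tjpp} applies to $U/X$, and Lemma~\ref{permuExt=0} splits $0\to X\to U\to U/X\to 0$. If you want to salvage your outline, you should redirect the splitting: instead of extracting an $RG$-free piece, extract a maximal $G$-trivial piece of $U^{N}$ and push it down to an $RN$-direct summand of $U$.
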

   The purpose of this paper is to give a generalization of Theorem \ref{Torr.Wei} for  arbitrary complete discrete valuation ring in mixed characteristic: 
   \begin{theorem}\label{myresult}
    Let $R$ be a complete discrete valuation ring in mixed characteristic with residue field of characteristic $p$, let $G$ be a cyclic $p$-group and let $U$ be an $RG$-lattice. Suppose there is a subgroup $N$ of $G$ such that:
     \begin{enumerate}
        \item the lattice $U$ restricted to $N$ is an $RN$-permutation module,
        \item  the submodule of $N$-fixed points $U^N$  is an $R[G/N]$-permutation module.
    \end{enumerate}
    Then $U$ itself is an $RG$-permutation module.
  \end{theorem}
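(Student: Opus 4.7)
The plan is to prove Theorem \ref{myresult} by induction on $|G|$, using Theorem \ref{tjpp} as the main tool. The base cases are handled immediately by hypothesis: if $N=G$, condition (1) is the conclusion, and if $N=1$, condition (2) is. For the inductive step, assume $1\lneq N\lneq G$, and let $G_1\leq G$ denote the unique subgroup of order $p$, which must lie in $N$.

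The first step is to show that $U^{G_1}$ is an $R[G/G_1]$-permutation module, by applying the inductive hypothesis to the triple $(G/G_1,\,N/G_1,\,U^{G_1})$. The condition on $N/G_1$-fixed points holds because $(U^{G_1})^{N/G_1}=U^N$ is $R[G/N]$-permutation by assumption, identifying $G/N$ with $(G/G_1)/(N/G_1)$. For the restriction condition, I would decompose $U\res{N}=\bigoplus_i R[N/N_i]$ and compute $(U\res{N})^{G_1}=(U^{G_1})\res{N/G_1}$ summand by summand: since $N$ is a cyclic $p$-group, each stabilizer $N_i$ either contains $G_1$ or is trivial, and in both cases $(R[N/N_i])^{G_1}$ is easily seen to be an $R[N/G_1]$-permutation module (equal to $R[(N/G_1)/(N_i/G_1)]$ in the first case and to $\Sigma_{G_1}\cdot R[N/G_1]\cong R[N/G_1]$ in the second).

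With $U^{G_1}$ known to be $R[G/G_1]$-permutation, I would conclude by a case analysis on $U\res{N}$. If no $R[N/N_i]$ summand is $RN$-free, then every $N_i$ contains $G_1$, so $G_1$ acts trivially on $U\res{N}$ and hence on all of $U$. In this case $U=U^{G_1}$ is $R[G/G_1]$-permutation; pulling back stabilizer subgroups along $G\to G/G_1$ exhibits $U$ as $RG$-permutation. Otherwise $U\res{N}$ has an $RN$-free summand, and one would like to invoke Theorem \ref{tjpp} with the normal subgroup $G_1$; however, $U\res{G_1}$ is only a mixed permutation module, with both $RG_1$-free summands (from the $RN$-free part of $U\res{N}$) and trivial $RG_1$-summands (from the non-free ones), so Theorem \ref{tjpp} does not apply directly. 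To handle this remaining case I would perform a secondary induction on the rank of $U$: using the permutation decomposition of $U^{G_1}$ produced in the previous step, locate an $RG$-direct summand of $U$ isomorphic to $R[G/H]$ with $G_1\leq H$, remove it, and apply the induction to the complement.

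The principal obstacle is this lifting step. An $R[G/G_1]$-direct summand of $U^{G_1}$ need not a priori extend to an $RG$-direct summand of $U$, and producing the splitting requires genuine cohomological input. I expect this is where the hypothesis that $U\res{N}$ is permutation (rather than merely that $U^{G_1}$ is) enters essentially, via a computation of the relevant $\Ext^1_{RG}$ groups or a careful analysis of how a permutation basis of $U\res{N}$ can be extended $G$-equivariantly using the permutation basis of $U^N$.
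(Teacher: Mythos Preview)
Your reduction to $|N|=p$ is correct and is exactly what the paper does: one applies the inductive hypothesis to $(G/G_1,\,N/G_1,\,U^{G_1})$ to conclude that $U^{G_1}$ is $R[G/G_1]$-permutation, so without loss $N=G_1$. Your dichotomy is also right: either $G_1$ acts trivially and one is done, or $U\res{G_1}$ is a genuine mix of free and trivial pieces.

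The gap you flag is real, and your proposed way out --- lifting an indecomposable summand $R[G/H]$ of $U^{G_1}$ to an $RG$-direct summand of $U$ --- is not how the paper proceeds, and it is not clear it can be made to work directly. The paper does not lift summands out of $U^{G_1}$. Instead it works inside $U^N\subseteq U$, which is already an $RG$-submodule. The key construction (in the main case) is an $RG$-decomposition $U^N=Y\oplus X$ for which $X$ happens also to be an $RN$-complement to the free part $F$ in $U\res{N}$, i.e.\ $U\res{N}=F\oplus X$. Given this, the quotient $U/X$ has $(U/X)\res{N}\cong F$ free and $(U/X)^N=Y$ permutation, so Theorem \ref{tjpp} (not the rank induction) applies to $U/X$; the short exact sequence $0\to X\to U\to U/X\to 0$ then splits over $RG$ because $\Ext^1_{RG}$ vanishes between permutation lattices. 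So the splitting over $G$ comes \emph{after} one already knows both $X$ and $U/X$ are permutation, not before.

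Producing such an $X$ is the technical heart, and it needs more than an $\Ext^1$ computation. The decisive ingredient is a lemma showing that $\overline{U^N/\widehat{N}U}$ is $\F[G/N]$-free; this is a direct cohomological calculation using that $U_N$ and $(U^N)_G$ are lattices and $U^G=\widehat{G/N}U^N$ (all of which follow once $U^N$ is permutation with no trivial summands). Freeness of this quotient splits $\overline{U^N}=\overline{\widehat{N}F}\oplus X'$ over $\F[G/N]$; idempotent lifting (using $\Ext^1_{RG}(U^N,U^N)=0$) gives $U^N=Y\oplus X$ with $\overline{X}=X'$; and a Nakayama argument then forces $U\res{N}=F\oplus X$. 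When $U^N$ does have trivial $R[G/N]$-summands, the paper first peels those off by a separate argument of the same flavour and then reduces to the free-$U^N$ case. Your sketch correctly anticipates that cohomological input is required, but the specific mechanism --- freeness of $\overline{U^N/\widehat{N}U}$ and the ensuing Nakayama step --- is the idea your proposal is missing.
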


  It is worth noting that  a characterization has already been obtained by MacQuarrie and Zalesskii when $R$ is the ring of $p$-adic integers $\Z_p$ and $G$ is a general $p$-group \cite[Theorem 2]{John}. But in \cite[Theorem 2]{John}, much stronger hypotheses were imposed.
   
   As usual, we denote by $\text{H}^1(G,-)$ the first right derived functor of the fixed point
   functor $(-)^G$.      
    Let $G$ be a finite $p$-group. An $RG$-module $U$ is called $G$-\emph{coflasque} if $\text{H}^1(L,U)=0$ for every subgroup $L$ of $G$. If $U$ is an $RG$-permutation module then $\text{H}^1(G,U)=0$ \cite[Lemma 1.5]{Gregory}, and hence (since the restriction of a permutation module to a subgroup is a permutation module) $RG$-permutation modules are $G$-coflasque. However, in general,  there are $RG$-lattices that are $G$-coflasque but are not $RG$-permu\-tation modules. But when $G$ is a cyclic $p$-group and $p$ is prime in $R$, then it is sufficient to ensure that an $RG$-lattice is $G$-coflasque  for it to be an $RG$-permutation module (cf.\cite{TORRECILLAS2013533}, Proposition 6.7). This fact is fundamental for the authors in \cite{TORRECILLAS2013533} to demonstrate Theorem \ref{Torr.Wei}. 
    
   We prove  Theorem \ref{myresult} in a different way from that in \cite{TORRECILLAS2013533}. In \cite{TORRECILLAS2013533}, the authors make  a detailed analysis of
   the category of cohomological $G$-Mackey functors with values in the category of $R$-modules.  We only  use  Theorem \ref{tjpp}  and basic facts about Group Cohomology.  We also get a generalization of Proposition $6.7$ in \cite{TORRECILLAS2013533} for complete discrete valuation rings in mixed characteristic for which   coflasque $RC_p$-modules are permutation modules where $C_p$ is the cyclic group of order $p$:
   \begin{theorem}\label{corollo}
 Let $R$ be a complete discrete valuation ring in mixed characteristic with residue field of characteristic $p$ such that  coflasque $RC_p$-lattices are $RC_p$-permutation  modules. Let $G$ be a finite  cyclic $p$-group and let $U$ be an $RG$-lattice. Then the following are equivalent:
 \begin{enumerate}
     \item $U$ is $RG$-permutation module,
     \item $U$ is $G$-coflasque.
     \item $U_N$ is an $R$-lattice for all subgroups $N$ of $G$,

     where $U_N$ is the largest quotient of $U$ in which $N$ acts trivially.
 \end{enumerate}
\end{theorem}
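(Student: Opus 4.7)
The plan is to prove four implications: $(1)\Rightarrow(2)$ and $(1)\Rightarrow(3)$ are immediate, $(3)\Rightarrow(2)$ is a short cohomological computation, and the main work is in $(2)\Rightarrow(1)$, which I would carry out by induction on $|G|$ using Theorem \ref{myresult}.

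For the easy directions: $(1)\Rightarrow(2)$ is already invoked in the introduction, and $(1)\Rightarrow(3)$ follows because if $U=\bigoplus_i R[G/H_i]$ then $U_N\cong\bigoplus_i R[(G/N)/(H_iN/N)]$ is an $R[G/N]$-permutation module, in particular $R$-free. For $(3)\Rightarrow(2)$ I would exploit the fact that every subgroup $L\leq G$ is cyclic, so Tate cohomology is $2$-periodic and
\[ H^1(L,U)=\hat H^{-1}(L,U)=\ker\bigl(\overline{N_L}\colon U_L\to U^L\bigr), \]
where $\overline{N_L}$ is induced by the norm element. Tensoring with the fraction field $K$ of $R$, semisimplicity of $KL$ forces $(U\otimes K)_L=(U\otimes K)^L$, on which $\overline{N_L}$ is multiplication by $|L|$ and thus invertible. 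So $\overline{N_L}\otimes K$ is injective, and since hypothesis $(3)$ makes $U_L$ torsion-free (hence a submodule of $U_L\otimes K$), $\overline{N_L}$ itself is injective and $H^1(L,U)=0$.

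The main implication $(2)\Rightarrow(1)$ would go by induction on $|G|=p^n$. The base case $n=1$ is exactly the standing hypothesis on $R$. For $n\geq 2$ I would take $N\lhd G$ to be the unique subgroup of index $p$ and verify both hypotheses of Theorem \ref{myresult}: the module $U\res{N}$ inherits coflasqueness from $U$ (every subgroup of $N$ is a subgroup of $G$) and so is an $RN$-permutation module by the inductive hypothesis applied to the cyclic $p$-group $N$; for the invariants, $U^N$ is an $R$-lattice as a submodule of the $R$-free, finitely generated module $U$ over the PID $R$, and the inflation--restriction sequence
\[ 0\to H^1(G/N,U^N)\to H^1(G,U)\to H^1(N,U)^{G/N}, \]
combined with $H^1(G,U)=0$, shows that $U^N$ is $G/N$-coflasque. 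Since $G/N\cong C_p$, the hypothesis on $R$ promotes $U^N$ to an $R[G/N]$-permutation module, and Theorem \ref{myresult} then delivers the conclusion.

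The main obstacle, I expect, is conceptual rather than technical: the whole argument depends critically on having Theorem \ref{myresult} available for ramified $R$, since that is what glues the inductive step together. Once that theorem is granted the induction is smooth, the base case is tautological, and the only external inputs are the standard inflation--restriction sequence and the periodicity of Tate cohomology for cyclic groups.
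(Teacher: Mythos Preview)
Your proposal is correct and follows essentially the same architecture as the paper: the easy implications are handled as in the introduction and preliminaries (the paper records $(2)\Leftrightarrow(3)$ via the identity $H^1(N,U)=\Ker(\widehat N)/I_NU$, which is the same content as your Tate-cohomology argument), and the substantive implication $(2)\Rightarrow(1)$ is done by induction on $|G|$ with Theorem~\ref{myresult} as the inductive engine and inflation--restriction supplying coflasqueness of $U^N$.

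The one noteworthy difference is the choice of $N$ in the inductive step. You take $N$ of \emph{index} $p$, so that induction handles $U\res N$ and the standing hypothesis on $RC_p$ handles $U^N$ (since $G/N\cong C_p$). The paper instead takes $N$ of \emph{order} $p$, so that the $RC_p$-hypothesis handles $U\res N$ directly, while induction is applied to $U^N$ over $G/N$; this requires running inflation--restriction for every intermediate $M$ with $N\le M\le G$ to see that $U^N$ is $G/N$-coflasque. Your dual choice is slightly cleaner in that only a single inflation--restriction computation is needed, but both routes are equally valid and rest on exactly the same ingredients.
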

It should be noted that when $ p$ is unramified in $ R $, every  $RC_p$-coflasque module is an  $RC_p$-permutation module (cf.\cite{butler2}, $3.5(a)$). There are  cases where this remains true even when $ p$ is ramified in  $R$. For instance, this is the case when  $p = 2$, as follows from \cite[$3.5(b)$]{butler2} and \cite[Proposition~6.14]{MACQUARRIE2020106925}.

This paper is organized as follows. Section \ref{s2} introduces the necessary definitions and preliminary observations. Theorems \ref{myresult} and \ref{corollo} are proved in Section \ref{s3}.

\subsubsection*{Acknowledgements}
    The author thanks  John William MacQuarrie for helpful conversations and for his attention and help in preparing this manuscript.
    The author was supported by  CAPES Doctoral Grant 88887.688170/2022-00 and by FAPEMIG Doctoral Grant 13632/2025-00.

\section{Preliminaries}\label{s2}
 
    Let $\pi$ be a prime element of $R$ and  denote by  $\F$  the residue field $R/\pi R$. Throughout the text, $U$ will denote  an $RG$-module, and   we denote by $\overline{U}$ the corresponding $\F G$-module  $U/\pi U$. We will write at times 
   $\overline{X}$ when $X$ is a submodule of $U$. In our context, $X$ will always  be an $R$-direct summand of  $U$, thus the two possible interpretations coincide: $(X+\pi U)/\pi U\simeq X/\pi X$. 
   \begin{lemma}\label{Ext=0}
    Let $U$ be a finitely generated $RG$-lattice and suppose that $\Ext_{RG}^1(U,U)=0$. If there is a decomposition of $\overline{U}=X'\oplus Y'$ as $\F G$-module, then there exists a decomposition $U=X\oplus Y$ as $RG$-module with  $\overline{X}=X'$ and $\overline{Y}=Y'$.
\end{lemma}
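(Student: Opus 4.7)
The plan is to translate the desired decomposition into a lifting of an idempotent in an endomorphism ring. The $\F G$-decomposition $\overline U = X' \oplus Y'$ corresponds to an idempotent $\bar e \in \overline E := \End_{\F G}(\overline U)$ (the projector onto $X'$ along $Y'$), and an $RG$-decomposition $U = X \oplus Y$ lifting it corresponds to an idempotent $e \in E := \End_{RG}(U)$ whose image modulo $\pi$ is $\bar e$. It therefore suffices to establish (i) that the reduction map $E \to \overline E$ is surjective, and (ii) that idempotents in $\overline E$ can be lifted through this surjection.

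For step (i), I would apply $\Hom_{RG}(U,-)$ to the short exact sequence $0 \to U \xrightarrow{\cdot \pi} U \to \overline U \to 0$ to obtain the exact sequence
$$E \xrightarrow{\cdot \pi} E \longrightarrow \Hom_{RG}(U, \overline U) \longrightarrow \Ext_{RG}^1(U,U) = 0.$$
Because $\overline U$ is annihilated by $\pi$, every $RG$-homomorphism $U \to \overline U$ factors uniquely through $\overline U$, producing a canonical identification $\Hom_{RG}(U, \overline U) \iso \overline E$. Combining these, the reduction map induces an isomorphism $E/\pi E \iso \overline E$, and in particular $E \to \overline E$ is surjective. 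This is precisely where the hypothesis $\Ext_{RG}^1(U,U) = 0$ is essential: without it the cokernel of the reduction map would contain the $\pi$-torsion of $\Ext_{RG}^1(U,U)$ and an arbitrary $\bar e$ need not admit any endomorphism lift.

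For step (ii), $E$ sits as a submodule of the $R$-free module of finite rank $\End_R(U)$, so it is finitely generated as a module over the Noetherian complete discrete valuation ring $R$. Consequently $E$ is itself $\pi$-adically complete, and $\pi E$ lies in the Jacobson radical of $E$, so a standard lifting-of-idempotents argument in a complete (possibly noncommutative) ring produces an idempotent $e \in E$ reducing to $\bar e$. Setting $X := eU$ and $Y := (1-e)U$ then yields an $RG$-decomposition $U = X \oplus Y$, and reducing modulo $\pi$ recovers $\overline U = \bar e \overline U \oplus (1-\bar e)\overline U = X' \oplus Y'$, as required. The only substantive step is (i); step (ii) is formal.
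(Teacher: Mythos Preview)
Your proposal is correct and follows essentially the same approach as the paper: both apply $\Hom_{RG}(U,-)$ to $0\to U\xrightarrow{\pi}U\to\overline U\to 0$, use the vanishing of $\Ext^1_{RG}(U,U)$ to identify $\End_{RG}(U)/\pi\End_{RG}(U)\cong\End_{\F G}(\overline U)$, and then lift the projection idempotent through this surjection using that $\End_{RG}(U)$ is a finitely generated $R$-algebra (the paper cites \cite[Theorem~1.9.4]{benson} for the idempotent lifting). Your write-up is slightly more explicit about why $\Hom_{RG}(U,\overline U)\cong\End_{\F G}(\overline U)$ and why $E$ is $\pi$-adically complete, but the argument is the same.
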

\begin{proof}
    The exact sequence
    $$\xymatrix{0\ar[r]&U\ar[r]^{\pi}&U\ar[r]&\overline{U}\ar[r]&0},$$
    induces the exact sequence 
    $$\xymatrix{0\ar[r]& \Hom(U,U)\ar[r]^{\pi}&\Hom(U,U)\ar[r]&\Hom(U,\overline{U})\ar[r]&\Ext_{RG}^1(U,U)}=0,$$
    so $\overline{\text{End(U)}}\simeq \text{End}(\overline{U})$. Let $\alpha'$ and $\beta'$ be the projections on $X'$ and $Y'$ respectively with $\alpha'+ \beta'=id_{\overline{U}}$.  Now, $\text{End}(U)$ is a finitely generated algebra over a complete discrete valuation ring, so we can find idempotents $\alpha$ and $\beta$ in $\text{End}(U)$ with $\alpha + \beta=id_U$  satisfying  $\overline{\alpha}=\alpha'$ and $\overline{\beta}=\beta'$ \cite[Theorem 1.9.4]{benson}.  Accordingly, $U$ is a direct sum of
 $RG$-modules $X \oplus Y$ with $\overline{X} = X',\overline{Y} = Y'$. 
\end{proof}

\begin{lemma}(\cite[Corollary 6.8]{MACQUARRIE2020106925}\label{permuExt=0})
    Let $G$ be a finite $p$-group. If $A$ and $B$ are $RG$-permutation modules then $\Ext_{RG}^1(A,B)=0$. 
\end{lemma}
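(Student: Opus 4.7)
The statement is cited from \cite{MACQUARRIE2020106925}, but here is the proof strategy I would use. Since $A$ and $B$ are finitely generated permutation modules, each decomposes as a finite direct sum of transitive permutation modules $R[G/H_i]$ and $R[G/K_j]$ for subgroups $H_i, K_j \leq G$. Because $\Ext^1_{RG}$ commutes with finite direct sums in each argument, it suffices to prove that $\Ext^1_{RG}(R[G/H], R[G/K]) = 0$ for arbitrary subgroups $H, K$ of $G$.

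The plan is then to invoke a Shapiro-type isomorphism. Identifying $R[G/H]$ with the induced module $\mathrm{Ind}_H^G R$ and applying Eckmann--Shapiro, one gets
\[
\Ext^1_{RG}\bigl(\mathrm{Ind}_H^G R,\, R[G/K]\bigr) \;\cong\; \Ext^1_{RH}\bigl(R,\, R[G/K]\res{H}\bigr) \;=\; \HH^1\bigl(H,\, R[G/K]\res{H}\bigr).
\]
Next I would apply the Mackey decomposition: as an $RH$-module, $R[G/K]\res{H}$ splits as a direct sum of modules of the form $R[H/(H\cap {}^gK)]$ indexed by $(H,K)$-double coset representatives $g$. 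A second application of Shapiro's lemma then reduces the problem to showing $\HH^1(L, R) = 0$ for every subgroup $L \leq G$, where $L = H \cap {}^gK$ acts trivially on $R$.

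For the final step, since $L$ acts trivially on $R$, the first cohomology identifies with homomorphisms, $\HH^1(L, R) = \Hom(L, R) = \Hom(L^{\mathrm{ab}}, R)$. Now $L$ is a finite $p$-group, so $L^{\mathrm{ab}}$ is a finite abelian $p$-group, while $R$ is a complete discrete valuation ring of mixed characteristic and in particular torsion-free. Hence any group homomorphism $L^{\mathrm{ab}} \to R$ is forced to be zero, which gives the desired vanishing.

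The only delicate step I expect is verifying that the Shapiro and Mackey isomorphisms apply cleanly in this setting; this requires noting that $R[G/H]$ is $R$-free of finite rank and using that induction from a subgroup of a finite group coincides with coinduction, so the usual adjunctions give the cohomological reformulation without any projectivity issue on $R[G/K]$. Once this bookkeeping is done, the argument reduces to the elementary observation that a finite $p$-group admits no nonzero homomorphisms into a torsion-free abelian group.
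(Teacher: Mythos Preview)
The paper does not actually prove this lemma; it merely records it with a citation to \cite{MACQUARRIE2020106925}. Your argument is correct and is precisely the standard one: reduce by additivity to transitive summands, apply Eckmann--Shapiro (legitimate because induction from $H$ to $G$ is exact, $RG$ being $RH$-free), use Mackey to decompose the restriction, apply Shapiro once more, and finish with $\HH^1(L,R)=\Hom(L,R)=0$ for $L$ a finite $p$-group and $R$ torsion-free. The bookkeeping you flag---that induction and coinduction agree for subgroups of finite index, so the adjunctions go through for cohomology---is exactly the point that needs to be checked, and it holds here. Nothing is missing.
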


   Let $N$ be a normal subgroup of $G$. Recall that the $RG$-module of  $N$-invariants of $U$ is the largest submodule of $U$ on which $N$ acts trivially, and $U_N$ is the largest $RG$-quotient module of $U$ on which $N$ acts trivially. Denote by $I_N$ the kernel of the augmentation map $RN\longrightarrow R$.  Explicitly, we have:
   $$U^N:=\{u \in U: nu=u \forall n \in N\},$$
   $$U_N:=U/I_NU.$$
    Let us denote by $\widehat{H}\in RH$  the sum 
   of all  elements of a  finite group $H$.   
   
\begin{lemma}\label{free}
    Let $G$ be a cyclic $p$-group and let  $U$ be a finitely generated $\F G$-module such that $U^G=\widehat{G}U$. Then $U$ is $\F G$-free.
\end{lemma}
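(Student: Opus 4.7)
My plan is to identify $\F G$ with a truncated polynomial ring, analyze each indecomposable summand separately, and conclude by a dimension count. Let $\sigma$ be a generator of $G$, write $|G| = p^n$, and set $x := \sigma - 1 \in \F G$. Because $\sigma^{p^n} = 1$ and $(1+x)^{p^n} = 1 + x^{p^n}$ in characteristic $p$, we have $x^{p^n} = 0$, and a dimension count yields $\F G \cong \F[x]/(x^{p^n})$.

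The one calculation I expect to take real care is verifying that under this identification $\widehat{G} = x^{p^n-1}$. I would expand $\widehat{G} = \sum_{i=0}^{p^n-1}(1+x)^i$ and collect powers of $x$ via the hockey-stick identity: the coefficient of $x^j$ equals $\binom{p^n}{j+1}$ for $0 \leq j \leq p^n-1$. These coefficients vanish in $\F$ for $j < p^n-1$ by Kummer's theorem, while $\binom{p^n}{p^n} = 1$ gives the top coefficient.

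With that in hand the remainder is bookkeeping. Since $\F[x]/(x^{p^n})$ is a Nakayama algebra, every finitely generated $\F G$-module decomposes as a direct sum of cyclic modules $V_k := \F[x]/(x^k)$ with $1 \leq k \leq p^n$, so I would write $U \cong \bigoplus_{i=1}^{s} V_{k_i}$. For each summand, the fixed points $V_k^G = \ker(x\,|\,V_k)$ form the one-dimensional subspace $\F\cdot x^{k-1}$, while $\widehat{G}V_k = x^{p^n-1}V_k$ vanishes when $k < p^n$ and equals the one-dimensional subspace $\F\cdot x^{p^n-1}$ when $k = p^n$, i.e.\ precisely when $V_k \cong \F G$. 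Letting $r$ denote the number of indices with $k_i = p^n$, this yields $\dim_\F U^G = s$ and $\dim_\F \widehat{G}U = r$. Since $\widehat{G}U \subseteq U^G$ always, the hypothesis $U^G = \widehat{G}U$ forces $s = r$, so every indecomposable summand is isomorphic to $\F G$ and $U$ is $\F G$-free.
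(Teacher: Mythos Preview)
Your proof is correct and follows essentially the same approach as the paper: decompose $U$ into indecomposables (the paper phrases these as the ideals of $\F G$, you as the cyclic modules $V_k$), observe that every non-free indecomposable is annihilated by $\widehat{G}$ while still contributing a nonzero fixed point, and conclude. Your version is simply more explicit---you carry out the identification $\F G\cong \F[x]/(x^{p^n})$ and the dimension count in detail, whereas the paper appeals directly to the classification of indecomposables and the fact that proper ideals lie in the augmentation ideal; incidentally, your hockey-stick computation can be shortened by noting $x\cdot\widehat{G}=(1+x)^{p^n}-1=x^{p^n}$ in $\F[x]$, hence $\widehat{G}=x^{p^n-1}$.
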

\begin{proof}
   For a cyclic $p$-group  $G$, there are finitely many isomorphism classes of finitely generated indecomposable modules for the  algebra $\F G$, namely, the ideals  of $\F G$ \cite[Theorem 6.1.2]{peter}. Since the proper ideals of $\F G$ are ideals that lie in the augmentation ideal  of $\F G$, they are annihilated by the multiplication of $\widehat{G}$. Now,  every $\F G$-module has $G$- fixed points,  hence  the only finitely generated $\F G$-module that has $U^G=\widehat{G}U$ must be  $\F G$-free.
 \end{proof}

   From now on, $G$ is a cyclic $p$-group and  $F$ denote  an $RN$-permutation module that does not have trivial direct summands. By $T$ we denote  an $RN$-lattice on which $N$ acts trivially; we will say that $T$ is $RN$-trivial.  We say that $T$ is a maximal trivial summand of $U\res{N}$ --- the restriction of $U$ to $N$ --- if $U\res{N}=S\oplus T$, where $S$ does not have trivial direct summands. Whenever $U\res{N}=S\oplus T$, we will  write $T$ for the module $(T+I_NU)/I_NU$, because, in this case $U_N=T_N\oplus S_N\simeq T\oplus S_N$ as $RN$-modules.

\begin{lemma}\label{G-inva}
     Let $U$ be an $RG$-lattice and $N\leqslant G$ such that $U\res{N}$ is an $RN$-permutation module. If $T$ is a maximal trivial summand of $U\res{N}$, then the image of the natural map $U^N\longrightarrow U_N\longrightarrow\overline{U_N}$ is $\overline{T}$. In particular, the submodule $\overline{T}$ of $\overline{U_N}$ does not depend on the decomposition of $U\res{N}$.
\end{lemma}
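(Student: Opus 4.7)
My plan is to exploit the given decomposition directly. Write $U\res{N} = S \oplus T$, where $S$ is an $RN$-permutation module with no trivial direct summands and $T$ is the maximal trivial summand. Since $N$ acts trivially on $T$, both the fixed-point and the coinvariant functors split compatibly: $U^N = S^N \oplus T$ and $U_N = S_N \oplus T$ (under the identification noted just before the lemma). Consequently the natural map $U^N \to U_N$ is the direct sum of the map $S^N \to S_N$ with $\mathrm{id}_T$, and it suffices to show that $S^N$ maps into $\pi U_N$.

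Since $S$ is a direct sum of modules of the form $R[N/H]$ with $H$ a proper subgroup of the $p$-group $N$, I would verify on each such transitive summand that the natural map $R[N/H]^N \to R[N/H]_N$ has image contained in $\pi R[N/H]_N$. The fixed-point module is the rank-one $R$-submodule generated by $\widehat{N/H}$; the coinvariant module is free of rank one over $R$, generated by the class of any coset; and by inspection the image of $\widehat{N/H}$ is $[N:H]$ times this generator. As $H$ is proper in the $p$-group $N$, the index $[N:H]$ is divisible by $p$, and hence lies in $\pi R$ because $R$ has residue field of characteristic $p$. Assembling over the summands of $S$, the image of $S^N$ in $U_N$ lies in $\pi U_N$ and therefore vanishes in $\overline{U_N}$.

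Putting the two pieces together, the image of $U^N = S^N \oplus T$ in $\overline{U_N}$ coincides with the image of $T$, which is exactly $\overline{T}$. The ``in particular'' clause is then immediate: the composition $U^N \to U_N \to \overline{U_N}$ is defined from $U$ alone, with no reference to a decomposition, so its image is intrinsic to $U$; since it equals $\overline{T}$ for any maximal trivial summand $T$, the submodule $\overline{T} \subseteq \overline{U_N}$ does not depend on the chosen decomposition of $U\res{N}$.

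I do not anticipate any serious obstacle. The only substantive computation is the assertion on a single transitive summand $R[N/H]$, and this is immediate from the explicit description of $\widehat{N/H}$ together with the action of $I_N$ on cosets; the rest is bookkeeping on direct summands.
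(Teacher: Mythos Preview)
Your proposal is correct and follows essentially the same approach as the paper: decompose $U\res{N}=S\oplus T$, note that $U^N=S^N\oplus T$ with $S^N$ generated by elements of the form $\widehat{N/H}$ for proper $H\leqslant N$, and observe that these map to multiples of $p\in\pi R$ in the coinvariants, hence vanish in $\overline{U_N}$. The paper's proof is simply a terser rendition of the same computation.
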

\begin{proof}
    Write $U\res{N}=F\oplus T$ where $T$ is maximal trivial. Then $U^N=F^N\oplus T$ and the summands of $F^N$ are of the form $\widehat{N/M}(R[N/M])$ for subgroups $M$ of $N$. But these  summands go to zero in $\overline{U_N}$, hence $\overline{T}$ is the image of $U^N$ in $\overline{U_N}$.
\end{proof}
   Remember that for a cyclic $p$-group $G$ and for an $RG$-module $U$, we have $$H^1(G,U)=\Ker(\widehat{G})/I_GU$$ \cite[Corollary 3.5.2]{benson}. In particular, $U_G$ is a lattice if, and only if, $H^1(G,U)=0$. We will use this observation in what follows.

\begin{lemma}\label{G/M-free}
    Let $U$ be an $RG$-lattice and $N\leqslant G$ such that $U_N$ and $(U^N)_G$ are $RG$-lattices and $U^G=\widehat{G/N}U^N$. Then  $\overline{(U^N/\widehat{N}U)}$ is $\F[G/N]$-free.
\end{lemma}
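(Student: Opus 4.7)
The plan is to apply Lemma~\ref{free} to the $\F[G/N]$-module $\overline V$ with $V := U^N/\widehat NU$. Since $N$ acts trivially on both $U^N$ and $\widehat NU$, $V$ is an $R[G/N]$-module, and it suffices to establish $\overline V^{G/N}=\widehat{G/N}\,\overline V$; the inclusion $\widehat{G/N}\,\overline V \subseteq \overline V^{G/N}$ is automatic.

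First I would translate the three hypotheses cohomologically via the cyclic-cohomology formula $H^1(C,-) = \Ker(\widehat C)/I_C(-)$ recalled in the preamble. The hypothesis that $U_N$ is a lattice amounts to $\Ker(\widehat N) = I_NU$, so multiplication by $\widehat N$ is an $R[G/N]$-isomorphism $\widehat N : U_N \iso \widehat NU$. The hypothesis that $(U^N)_G$ is a lattice gives $H^1(G/N, U^N) = 0$, and $U^G = \widehat{G/N}U^N$ is precisely $\widehat H^0(G/N, U^N) = 0$. Since Tate cohomology is periodic for cyclic groups, $U^N$ is thus cohomologically trivial over $G/N$; by Rim's theorem $U^N$ is $R[G/N]$-projective, and since $R[G/N]$ is local (as $G/N$ is a $p$-group and $R$ has residue characteristic $p$), projective equals free. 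Consequently $U^N$ is $R[G/N]$-free and $\overline{U^N}$ is $\F[G/N]$-free, hence $\widehat H^*(G/N, \overline{U^N}) = 0$.

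Reducing the short exact sequence $0 \to \widehat NU \to U^N \to V \to 0$ modulo $\pi$ yields an exact sequence $0 \to \overline K \to \overline{U^N} \to \overline V \to 0$ of $\F[G/N]$-modules, where $\overline K$ denotes the image of $\widehat NU$ in $\overline{U^N}$. The long exact sequence in $G/N$-Tate cohomology, combined with the previous vanishing, produces the shift isomorphism
$$\overline V^{G/N}/\widehat{G/N}\,\overline V \;=\; \widehat H^0(G/N, \overline V) \;\iso\; H^1(G/N, \overline K).$$
Thus the problem reduces to showing $H^1(G/N, \overline K) = 0$.

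This final vanishing is the main obstacle. I would approach it by analyzing the surjection $\overline{U_N} \twoheadrightarrow \overline K$ induced by reducing the isomorphism $\widehat N : U_N \iso \widehat NU$ modulo $\pi$, whose kernel is controlled by the $\pi$-torsion of $V$. Combining this with the lattice hypothesis on $U_N$ and with the observation that the image of $U^G = \widehat{G/N}U^N$ in $V$ is exactly $\widehat{G/N}V$, any $H^1$-cocycle on $\overline K$ should be pulled back through $\overline{U_N}$ to a cocycle on $\overline{U^N}$, where cohomological triviality forces it to vanish. The careful bookkeeping between the $R$-integral and mod-$\pi$ statements is the technical heart of the argument.
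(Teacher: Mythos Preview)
Your overall strategy matches the paper's: reduce to Lemma~\ref{free} by showing $\overline V^{\,G/N}=\widehat{G/N}\,\overline V$. The observation that $\widehat H^{0}(G/N,U^{N})=\widehat H^{1}(G/N,U^{N})=0$ forces $U^{N}$ to be $R[G/N]$-free (via periodicity and Rim) is correct and slick; the paper does not isolate it. But your final step---the vanishing $H^{1}(G/N,\overline K)=0$---is a genuine gap, and the sketch you give is circular. Pushing a $1$-cocycle $k\in\overline K$ into $\overline{U^{N}}$ and using freeness there only writes $k=(g-1)u$ for some $u\in\overline{U^{N}}$; whether $u$ may be chosen in $\overline K$ is exactly the question, since the image of $u$ in $\overline V$ lies in $\overline V^{\,G/N}$ and you need it in $\widehat{G/N}\,\overline V$. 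Pulling back along the surjection $\overline{U_{N}}\twoheadrightarrow\overline K$ does not help either, because a lift of a cocycle need not remain a cocycle (its $\widehat{G/N}$-image lands only in the kernel $V[\pi]$, not in~$0$).

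The paper avoids $\overline K$ altogether and works integrally. It proves two vanishings, $H^{1}(G/N,\widehat N U)=0$ and $H^{1}(G/N,V)=0$, which yield surjections $U^{G}\twoheadrightarrow V^{G/N}\twoheadrightarrow\overline V^{\,G/N}$; combined with $U^{G}=\widehat{G/N}\,U^{N}$ this gives $\overline V^{\,G/N}=\widehat{G/N}\,\overline V$. The first vanishing comes from the isomorphism $\widehat N U\cong U_{N}$ together with inflation--restriction. The second is the real content and is done by an explicit element chase: starting from $u\in U^{N}$ with $\widehat{G/N}\,u\in\widehat N U$, one repeatedly uses the three hypotheses in the concrete forms $\Ker(\widehat N)=I_{N}U$, $\Ker(\widehat{G/N}\,|_{U^{N}})=I_{G}U^{N}$, and $U^{G}=\widehat{G/N}\,U^{N}$ to force $u\in\widehat N U+I_{G}U^{N}$. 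That chase is precisely the ``careful bookkeeping'' you allude to, and your freeness observation, pleasant as it is, does not bypass it.
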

\begin{proof}
     We will show that there is a natural surjective map $$\xymatrix{U^G\ar[r]&(U^N/\widehat{N}U)^G\ar[r]&\overline{(U^N/\widehat{N}U)}^G}.$$
     Since $U^G=\widehat{G/N}U^N$,  we must have $\overline{(U^N/\widehat{N}U)}^G=\widehat{G/N}\overline{(U^N/\widehat{N}U)}$. But with these conditions,  Lemma \ref{free} says that  $\overline{(U^N/\widehat{N}U)}$ must be $\F[G/N]$-free. To show this, first we should show that $H^1(G,NU)=H^1(G/N,U^N/\widehat{N}U)=0$, because in this case the exact sequences:
     $$\xymatrix{0\ar[r]&\widehat{N}U\ar[r]&U^N\ar[r]&U^N/\widehat{N}U\ar[r]&0}$$ 
     $$\xymatrix{0\ar[r]&U^N/\widehat{N}U\ar[r]^{\pi}&U^N/\widehat{N}U\ar[r]&\overline{U^N/\widehat{N}U}\ar[r]&0}$$
     will provide us a surjective map: $$\xymatrix{U^G\ar@{->>}[r]&(U^N/\widehat{N}U)^G\ar@{->>}[r]&\overline{(U^N/\widehat{N}U)}^G}.$$ To see that $H^1(G,\widehat{N}U)=0$, we observe that  $(U^N)_G$ and $U_N$ are lattices by hypothesis, so $H^1(G/N,U^N)=0$ and $H^1(N,U)=0$. So, from the inflation-restriction  sequence \cite[Theorem 9.84]{Rotman} we get the exact sequence  $$\xymatrix{0\ar[r]&H^1(G/N,U^N)\ar[r]&H^1(G,U)\ar[r]&H^1(N,U)^{G/N}}$$
      that shows that $H^1(G,U)=0$, so $U_G= (U_N)_{G/N}$ is a lattice, therefore $$H^1(G/N,U_N)=0.$$ Since $U_N\simeq \widehat{N}U$  we have $H^1(G/N,\widehat{N}U)=0$. To show that $$H^1(G/N,U^N/\widehat{N}U)=0,$$ 
     denote by $W$ the module $U^N/\widehat{N}U$. Denote by $K$ the kernel of the map defined on $W$  by  multiplication by $\widehat{G/N}$, and let $g$ be a generator of $G$. Then we have $H^1(G/N,W)=K/(g-1)W$. Now, if $u\in U^N$ and we have $\widehat{G/N}u=\widehat{N}u_1$ for some $u_1\in U$,  then the image of $u$ in $W$ lies in $K$ and 
     $$\widehat{N}((g-1)u_1)=0\Longrightarrow (g-1)u_1\in \Kern(\widehat{N})= I_NU=\widehat{G/N}(g-1)U,$$
     because $U_N$ is a lattice.
     It follows that $(g-1)u_1=\widehat{G/N}(g-1)u_2$ with $u_2\in U$, hence $u_1=\widehat{G/N}u_2+ u_3$ with $u_3\in U^G $, since $U^G=\widehat{G/N}U^N$ we get $u_3=\widehat{G/N}u_4$ with  $u_4\in U^N$. From this we have $$\widehat{G/N}u=\widehat{N}u_1=\widehat{N}(\widehat{G/N}u_2+ \widehat{G/N}u_4)=\widehat{G/N}(\widehat{N}u_2+|N|u_4)\Longrightarrow$$ $$\Longrightarrow u=\widehat{N}u_2+|N|u_4+(g-1)u_5,$$ with $u_5\in U^N$ because $(U^N)_G$ being a lattice  means that the kernel of map defined by multiplication by $\widehat{G/N}$ in $U^N$ is $I_GU^N$. So we get $u\in \widehat{N}U+|N|U^N+I_GU^N=\widehat{N}U+I_GU^N$, 
     therefore $K=(\widehat{N}U+I_GU^N)/\widehat{N}U$ and $K/(g-1)W=(\widehat{N}U+I_GU^N)/(\widehat{N}U+I_GU^N)=0$.

\end{proof}
 
\begin{lemma}\label{decomposi}
    Suppose $|G|=p^2$  and let $N$ be the subgroup of $G$ of order $p$. Suppose that $U$ is an $RG$-lattice such that: 
    \begin{enumerate}
        \item 
    $U\downarrow_N$
    is an $RN$-permutation lattice and $U^N=Y\oplus X$ is a decomposition of $U^N$ as an $RG$-module with  $X$ being a maximal trivial summand,
    \item the modules  $U_N$, $U_G$ are $RG$-lattices.
 \end{enumerate}
 Then there exists a decomposition $U\res{N}=F\oplus Z\oplus X$  with $X\oplus Z$ being an $RN$-maximal trivial summand.
 \end{lemma}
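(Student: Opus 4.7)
The plan is to reduce the desired $RN$-decomposition to a statement about the $\F N$-module $\overline{U}$ using the $\Ext^1$-vanishing of $U\res{N}$, and then to use the structural hypotheses on $U^N$, $U_N$ and $U_G$ to prove the required intersection identity.

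First I would observe that it is enough to show $X$ is an $RN$-direct summand of $U$. Granting this, write $U\res{N}=X\oplus W$; then $W$ is an $RN$-summand of the permutation module $U\res{N}$, so by Lemmas \ref{permuExt=0} and \ref{Ext=0} the module $W$ is itself $RN$-permutation, and hence decomposes as $W=F\oplus Z$ with $F$ free and $Z$ trivial. Thus $U\res{N}=F\oplus (X\oplus Z)$, and since the rank of $X\oplus Z$ equals the rank of any maximal trivial $RN$-summand of $U\res{N}$, $X\oplus Z$ is itself maximal trivial.

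Next, since $\Ext^1_{RN}(U,U)=0$ by Lemma \ref{permuExt=0}, and since $X$ is already an $R$-direct summand of $U$ (as an $R$-summand of $U^N$ and $U^N$ is an $R$-summand of $U$), a lifting-of-idempotents argument parallel to the proof of Lemma \ref{Ext=0} gives the equivalence: $X$ is an $RN$-direct summand of $U$ if and only if $\overline{X}$ is an $\F N$-direct summand of $\overline{U}$. Concretely, a lifted projection $U\to X'$ with $\overline{X'}=\overline{X}$ can be post-composed with the isomorphism $X'\xrightarrow{\sim}X$ obtained by Nakayama from the reduction-identity map $X\hookrightarrow U\to X'$. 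Because $\overline{X}$ is itself $\F N$-trivial and $\overline{U}\res{N}$ decomposes essentially uniquely as $(\F N)^a\oplus\F^b$, the summand condition for $\overline{X}$ is equivalent to
\[
\overline{X}\cap \widehat{N}\overline{U}\;=\;0 \qquad \text{in } \overline{U}.
\]

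The main obstacle is establishing this last identity. Using the hypothesis that $U_N$ is a lattice, $H^1(N,U)=0$, so $\overline{U}^N=\overline{U^N}=\overline{Y}\oplus \overline{X}$, and a calculation mirroring the one in Lemma \ref{G-inva} shows the kernel of the natural map $\overline{U}^N\to \overline{U_N}$ is precisely $\widehat{N}\overline{U}$. Hence $\overline{X}\cap \widehat{N}\overline{U}$ is the kernel of the composition $\overline{X}\hookrightarrow \overline{U^N}\to\overline{U_N}$. If this kernel were nonzero, one would obtain $u\in U$ and $x_0\in X\setminus \pi X$ with $\widehat{N}u=y_0+x_0$, and splitting the relation $x=\widehat{N}u+\pi v$ along the $R$-summand decomposition $v\in U^N=Y\oplus X$ forces $y_0\in \pi Y$. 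At this stage I would invoke the hypothesis that $U_G$ is a lattice, which by inflation–restriction yields $H^1(G/N,U^N)=0$, and analyze the $RG$-submodule $\langle \widehat{N}u\rangle_{RG}\subseteq U^N$: writing $y_0=\pi y'$ and exploiting the $G$-invariance of $x_0$ (so that every $G$-translate of $\widehat{N}u$ has $X$-component exactly $x_0$), the divisibility of $y_0$ by $\pi$ combines with the vanishing of $H^1(G/N,Y)$ to produce a trivial $RG$-submodule of $U^N$ that strictly enlarges $X$, contradicting the maximality of $X$. Turning this last contradiction into a clean algebraic argument — in particular, producing the claimed enlargement cleanly despite the fact that the decomposition $U\res{N}=F\oplus T$ is only $RN$-equivariant — is the delicate technical heart of the proof.
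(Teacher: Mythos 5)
Your reduction is sound as far as it goes: granting that $X$ is an $RN$-direct summand of $U$, the rest (Krull--Schmidt on the complement, the rank count for maximality, and the Nakayama argument identifying the lifted summand with the given submodule $X$) works, and the equivalence between ``$\overline{X}$ is an $\F N$-summand of $\overline{U}$'' and ``$\overline{X}\cap\widehat{N}\overline{U}=0$'' is correct for $\overline{U}\res{N}\simeq(\F N)^a\oplus\F^b$. But the proof has a genuine gap exactly where you say it does: the identity $\overline{X}\cap\widehat{N}\overline{U}=0$ is never established. Your sketched route --- extract $\widehat{N}u=y_0+x_0$ with $y_0\in\pi Y$, $x_0\in X\setminus\pi X$, and derive a contradiction with the maximality of $X$ by ``producing a trivial $RG$-submodule strictly enlarging $X$'' --- is not carried out, and it is far from clear that it can be: the element $\widehat{N}u$ is not $G$-fixed, its $G$-translates only agree in their $X$-components, and no mechanism is given for converting the $\pi$-divisibility of $y_0$ together with $H^1(G/N,Y)=0$ into an actual trivial direct summand of $U^N$ properly containing $X$. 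As written, this is a statement of the difficulty, not a proof.

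For comparison, the paper closes precisely this gap in two integral steps rather than one residual one. First it proves the exact identity $X\cap\widehat{N}U=pX$: from $x=\widehat{N}u$ one gets $(g-1)u\in\Ker(\widehat{N})=I_NU=(g^p-1)U$ (this is where the hypothesis that $U_N$ is a lattice enters), the factorization $g^p-1=\widehat{G/N}(g-1)$ yields $u=\widehat{G/N}u_1+u_2$ with $u_2\in U^G$, and projecting onto the decomposition $U^N=Y\oplus X$ kills the $Y$-part and leaves $x\in pX$. Second --- and this is the step your proposal is missing --- it upgrades this to the mod-$\pi$ statement by writing $x=\widehat{N}f+\pi t$ with respect to $U^N=\widehat{N}F\oplus T$ and multiplying by $p/\pi\in R$, so that $\tfrac{p}{\pi}x=\widehat{N}(\tfrac{p}{\pi}f+t)$ lies in $X\cap\widehat{N}U=pX$ \emph{exactly}; iterating shows the projection $\sigma\colon X\to T$ is injective with $\sigma(X)\cap\pi T=\pi\sigma(X)$, which is equivalent to your $\overline{X}\cap\widehat{N}\overline{U}=0$ and moreover hands you the complement $Z$ directly, with no need for lifting idempotents. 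If you want to complete your version, you should replace the maximality-contradiction sketch with this $p/\pi$-multiplication argument (which crucially uses the permutation decomposition $U\res{N}=F\oplus T$, not just the $\Ext$-vanishing it implies).
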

\begin{proof}
     Suppose $U^N=Y\oplus X$ where  $X$ is a  $G$-maximal trivial summand.  We claim that $X\cap \widehat{N}U=pX$. Indeed, $$x=\widehat{N}u\in X\cap \widehat{N}U\Longrightarrow  \widehat{N}(g-1)u=0\Longrightarrow (g-1)u\in \ker(\widehat{N}).$$ As $U_N$ is an $RG$-lattice, we have  $$\Ker(\widehat{N})=(g^p-1)U.$$ Note that $\widehat{G/N}(g-1)=g^p-1$, so we get  $$ u=\widehat{G/N}u_1+u_2,$$
     with $u_1\in U,$ and $u_2\in U^G$. Now, $\widehat{N}u_1=y_1+ x_1 $ and  $u_2=y_2+ x_2$ for some $x_i\in X$ and $y_i\in Y$.  It follows that $$x=\widehat{N}u=\widehat{N}(\widehat{G/N}u_1+u_2)=\widehat{G/N}y_1+px_1+py_2+px_2.$$ Since $U^N=Y\oplus X$ we have $x=p(x_1+x_2)\in pX$. Clearly, $pX\leqslant \widehat{N}U$, thus $pX=X\cap \widehat{N}U$. We will now  find the submodule $Z$. First, denote by $\sigma$ the restriction to $X$ of the natural projection  $\gamma:U\longrightarrow T $ relative to the decomposition $U\res{N}=F\oplus T$. We have $\sigma$ injective because if $x\in X\cap F=X\cap F^N$ then $x=\widehat{N}f$ with $f\in F$, and it follows that $x\in X\cap\widehat{N}U=pX$, so $x=px'\Longrightarrow\sigma(x)=p\sigma(x')=0 \Longrightarrow x'\in X\cap F^N\leqslant pX$. Continuing this process, we will get $x\in \bigcap_{i=1}^{\infty}p^iX\subseteq \bigcap_{i=1}^{\infty}\pi^iX=0$. We also have $\sigma(X)\cap \pi T=\pi\sigma(X)$. In fact, if $\pi t\in \sigma(X)$ then there exists $x\in X$ with $x=\widehat{N}f+\pi t $ 
 where  $f\in F$ and $t\in T$, because $U^N=\widehat{N}F\oplus T$. It follows that $$ \pi ^{-1}px=\pi^{-1}p(\widehat{N}f+\pi t)=\widehat{N}\pi^{-1}pf+pt=\widehat{N}(\pi^{-1}pf+ t)\in X\cap \widehat{N}U=pX.$$ From this, it follows that there exists  $x_1=\widehat{N}f_1 + t_1\in X$ with $f_1\in F$ and $t_1\in T$, such that $\pi^{-1}px=px_1$. So we get  $$\pi ^{-1}px=px_1=p(\widehat{N}f_1 + t_1),$$ therefore,
 $$\widehat{N}(\pi^{-1}pf+ t)=\pi^{-1}px=p(\widehat{N}f_1+t_1)\Longrightarrow$$
 $$ \Longrightarrow pt=pt_1 \Longrightarrow t=t_1\Longrightarrow  x_1=\widehat{N}f_1 + t,$$ and $\sigma(x_1)=t\in \sigma(X)$. Thus,  $\pi t \in \pi\sigma(X)$ and we get $\pi\sigma(X)=\sigma(X)\cap \pi T$. Then, since $R$ is a discrete valuation ring, we can write $T=\sigma(X)\oplus Z$ for some $R$-submodule $Z\leqslant T$. Furthermore, since for every $x\in X$ there exists $f\in F$ such that $x=f+\sigma(x)$, we get $\sigma(X)\leqslant F+ X$. From this, we get  $U=F\oplus\sigma(X)\oplus Z=F+X+ Z$. Note that $Z\cap X=0$ because $\sigma(X)\cap Z=0$ and $\sigma$ is a restriction of a map that is the identity on $T$. To finish,  we observe that $F\cap(X\oplus Z)=0$, because if $f\in F$ and $f=x+z$ with $x\in X$ and $z\in Z$, then $0=\gamma(f)=\gamma(x+z)\Longrightarrow \sigma(x)=\gamma(x)=-\gamma(z)=-z$, but $\sigma(X)\cap Z=0$, so $z=x=0$. Therefore, $U\res{N}=F\oplus Z\oplus X$.
\end{proof}
\section{Proof of main theorems}\label{s3}

\begin{proof}[Proof of Theorem \ref{myresult}]
    We argue by induction in the order of $G$ and $\text{rank}(U)$.
    If $|G|=p$ then the theorem is obvious. If $\text{rank}(U)=1$ then a generator $g$ of $G$ acts on $U$ as multiplication by a $p$th root of unity. Since $U\res{N}$ is a permutation module, $N$ acts trivially on $U$. But then $U^N=U$ is an $RG$-permutation module.
     Suppose $|G|>p$ and $\text{rank}(U)>1$. First, we observe that we can assume that $|N|=p$. In fact, if $L$ is the subgroup of $G$ with order $p$  then $U\res{L}$ is an $RL$-permutation module because $U\res{N}$ is a permutation module. We also have that $(U^L)\res{N/L}$ is an $R[N/L]$-permutation module because $U\res{N}$ is an $RN$-permutation module. Now, $(U^L)^{N/L}=U^N$ is an $RG$-permutation module by hypothesis, hence $U^L$ is an $R[G/L]$-permutation module by the induction hypothesis on $|G|$. So, the subgroup $L$ of order $p$ must satisfy the same conditions as $N$, that is, $U\res{L}$ is an $RL$-permutation module and $U^L$ is an $R[G/L]$-permutation module.  We will divide the proof in two cases:
    \begin{itemize}
        \item $|G|=p^2$: As $U^N$ is an $R[G/N]$ permutation module, then  the summands of $U^N$  are either all free, or  some of them are trivial.  First, suppose that $U^N$ is $G/N$-free. Then we have $U^G=\widehat{G/N}U^N$, and by hypothesis $U_N$ and $(U^N)_G$ are lattices, so by Lemma \ref{G/M-free} we get that $\overline{(U^N/\widehat{N}U)}$ is  $\F[G/N]$-free.
      If $U\res{N}$ is $RN$-free then by Theorem \ref{tjpp}, $U$ is an $RG$ permutation module since $U^N$ is an $RG$-permutation module. Now, write $U\downarrow_N=F\oplus T$ where $F$ is $RN$-free and $T\neq 0$ is $N$-trivial. Observe that $\widehat{N}U=\widehat{N}F\oplus pT$, so the image of $\widehat{N}U$ in $\overline{U^N}$ is $\widehat{N}\overline{F}=(\widehat{N}F+\pi U^N)/\pi U^N$. As $U^N=\widehat{N}F\oplus T$, we have $U^N/\widehat{N}U\simeq T/pT$ as $R$-modules. So we have an exact sequence
     $$\xymatrix{0\ar[r]&\overline{\widehat{N}F}\ar[r]&\overline{U^N}\ar[r]&\overline{U^N/\widehat{N}U}\ar[r]&0}.$$ Since $\overline{U^N/\widehat{N}U}$ is $\F[G/N]$-free, the sequence splits, so there exists a submodule $X'$ of  $\overline{U^N}$ such that $\overline{U^N}=\overline{\widehat{N}F}\oplus X'$. Since $U^N$ is an $RG$-permutation module, by Lemmas \ref{Ext=0} and \ref{permuExt=0} there exists a decomposition $U^N=Y\oplus X$ such that $\overline{X}=X'$ and $\overline{Y}=\overline{\widehat{N}F}$. From this, it follows that $U^N=\widehat{N}F\oplus X$ as $RN$-modules, and in particular $X\cap F=0$. So, the image of $X$ in $\overline{U_N}$ is $\overline{T}$ and since $\overline{U_N}=\overline{F_N}\oplus \overline{T}
     $,  we have $U\res{N}=F\oplus X + \pi U+ I_NU$. By Nakayama's Lemma we have $U\res{N}=F\oplus X$. Consider the exact sequence $$\xymatrix{0\ar[r]&X\ar[r]&U\ar[r]&U/X\ar[r]&0}.$$ We have $(U/X)^N=U^N/X=Y$ and $(U/X)\res{N}=F$ because  $U\res{N}=F\oplus X$. As $Y$ is a direct summand of the  $RG$-permutation module $U^N$, then $Y$ is an $RG$-permutation module \cite[Theorem 7.1]{Gregory}. Therefore, by Theorem \ref{tjpp}, we have $U/X$ an $RG$-permutation module. Since $X$ is an $RG$-permutation module, the previous sequence splits over $G$ by Lemma \ref{permuExt=0}, and we have $U\simeq X\oplus U/X$ an $RG$-permutation module.

      Now, let us consider the case where  $U^N=Y\oplus X$ with $Y$ being $R[G/N]$-free and $X$ being $R[G/N]$-trivial. Then by Lemma \ref{decomposi} we can write $U\res{N}=F\oplus Z\oplus X$ with $X\oplus Z$ being an $RN$-maximal trivial summand.  
        Consider the exact sequence
     $$\xymatrix{0\ar[r]&X\ar[r]&U\ar[r]& U/X\ar[r]&0}.$$
     We have $(U/X)\res{N}=F\oplus Z$ and $(U/X)^N=U^N/X=Y$ because $U\res{N}=F\oplus Z\oplus X$. Hence by the  previous argument ($U^N/X=Y$ is $R[G/N]$-free)  $U/X$ is  an $RG$-permutation module. Since $X$ is an $RG$-permutation module, we have $U\simeq X\oplus U/X$ an $RG$-permutation module.

    \item $|G|>p^2$:
     Suppose that  $M$ is a proper subgroup of $G$ containing $N$ properly. Then $U\res{M}$ is an $RM$-permutation lattice by the induction hypothesis on $|G|$ because $U^N\res{M}$ is an $RM$-permutation module and $U\res{N}$ is an $RN$-permutation module. If $U\res{N}$ is $RN$-free, then the result is true by Theorem \ref{tjpp}. So suppose $U\res{N}=F\oplus T$ with $F$ free and $T$ trivial. If $U\res{M}=F_1\oplus T_1$ has a non zero maximal trivial summand $T_1$, then this trivial summand is also an $RM$-maximal  trivial summand of $U^N$. Indeed, we have   $(U^N)\res{M}=F_1^N\oplus T_1$ and $F_1^N$ does not have $RM$-trivial summands -- for the permutation module $R[G/H]$, we have $(R[M/H])^N\simeq R[M/HN]$ for any $H\leqslant M$. Since $U^N$  is an $RG$-permutation module, we can find an $RM$-maximal trivial summand $X$ of $U^N$ which  is $G$-invariant, that is, we can write  $U^N=Y\oplus X$. As $X$ is a maximal trivial summand, by Lemma \ref{G-inva} the image of $Y^M$ in $\overline{(U^N)_M}$ is zero, thus   the images of $X$ and $T_1$ coincide on $\overline{(U^N)_M}$. Therefore, we can write  $U^N=F_1^N\oplus X$ and, consequently, $X\cap F_1=0$. It follows that the images of $X$ and $T_1$ coincide on $\overline{U_M}$, and therefore $U=F_1\oplus X+\pi U+I_MU$. By Nakayama's Lemma, we have $U\res{M}=F_1\oplus X $. Now, consider the exact sequence of $RG$-lattices:
    $$\xymatrix{0\ar[r]&X\ar[r]&U\ar[r]&U/X\ar[r]&0}.$$
     This sequence splits over $M$ because  $U\res{M}=F_1\oplus X $, so  $(U/X)^M=U^M/X=Y^M$. Since $Y$ is an $RG$-permutation module, so is $Y^M$. Then by the induction hypothesis on $\text{rank}(U)$ we have $U/X$ a permutation module. By Lemma \ref{permuExt=0}, the exact sequence  
     $$\xymatrix{0\ar[r]&X\ar[r]&U\ar[r]&U/X\ar[r]&0}$$
     splits over $G$, consequently $U\simeq X\oplus U/X$ is an $RG$-permutation module. Now, if for every proper subgroup $M$ of $G$ which properly contains $N$, $U\res{M}$ does not have an $RM$-maximal trivial summand, then $U^N$ is $G/N$-free because an $RM$-maximal trivial summand of $U\res{M}$ is an $RM$-maximal  trivial summand  of $U^N$ and vice-versa, since $U\res{M}$ is an $RM$ permutation module by the induction hypothesis. So we argue like in the first part of the proof of the case $|G|=p^2$: By the hypothesis and by Lemma \ref{G/M-free}  we get  that $\overline{(U^N/\widehat{N}U)}$  is $\F[G/N]$-free.

      Write $U\downarrow_N=F\oplus T$ where $F$ is $RN$-free and $T$ is $N$-trivial. Observe that $\widehat{N}U=\widehat{N}F\oplus pT$, so the image of $\widehat{N}U$ in $\overline{U^N}$ is $\widehat{N}\overline{F}=(\widehat{N}F+\pi U^N)/\pi U^N$. As $U^N=\widehat{N}F\oplus T$ we have $U^N/\widehat{N}U\simeq T/pT$ as $R$-modules. So, we have an exact sequence
     $$\xymatrix{0\ar[r]&\overline{\widehat{N}F}\ar[r]&\overline{U^N}\ar[r]&\overline{U^N/\widehat{N}U}\ar[r]&0}.$$ Since $\overline{U^N/\widehat{N}U}$ is $\F[G/N]$-free, the sequence splits. Then there exists a submodule $X'$ of  $\overline{U^N}$ such that $\overline{U^N}=\overline{\widehat{N}F}\oplus X'$. Since $U^N$ is an $RG$-permutation module, there exists a decomposition $U^N=Y\oplus X$ such that $\overline{X}=X'$ and $\overline{Y}=\overline{\widehat{N}F}$. From this, it follows that $U^N=\widehat{N}F\oplus X$ as $RN$-modules, in particular $X\cap F=0$. So the image of $X$ in $\overline{U_N}$ is $\overline{T}$, and  since $\overline{U_N}=\overline{F_N}\oplus \overline{T}
     $  we have $U\res{N}=F\oplus X + \pi U+ I_NU$. By Nakayama's Lemma we have $U\res{N}=F\oplus X$. Consider the exact sequence $$\xymatrix{0\ar[r]&X\ar[r]&U\ar[r]&U/X\ar[r]&0}.$$ Then $(U/X)^N=U^N/X=Y$ and $(U/X)\res{N}=F$ because $H^1(N,X)=0$ and $U\res{N}=F\oplus X$. By Theorem \ref{tjpp}, we have $U/X$ an $RG$-permutation module. Now, being  $X$ an $RG$-permutation module, the previous sequence splits over $G$ and we get $U\simeq X\oplus (U/X)$ an $RG$-permutation module.
    \end{itemize} 
\end{proof}
\begin{proof}[Proof of Theorem \ref{corollo}]
    We establish the result by induction on $|G|$. If order $G$ is $p$, then  $U$ is permutation module  because by hypothesis $RC_p$ coflasque modules are $RC_p$-permutation modules. Now, suppose $|G|>p$ and let $N$ be the subgroup of $G$ of order $p$. Then $U\res{N}$ is an $RN$-permutation module since $H^1(N,U)=0$ and by hypothesis $RC_p$-coflasque modules are $RC_p$-permutation modules. Now, let $M$ be a subgroup of $G$  that contains $N$. From the inflation-restriction sequence we get the exact sequence 
    $$\xymatrix{0\ar[r]&H^1(M/N,U^N)\ar[r]&H^1(M,U)\ar[r]&H^1(N,U)^{G/N}}.$$ Since $H^1(M,U)=0$ by hypothesis, it follows  from the exact sequence that $$H^1(M/N,U^N)=0.$$ So  the module  $U^N$ is  $G/N$-coflasque, hence  $U^N$ is an $R[G/N]$-permutation module by  induction hypothesis. Therefore,  the lattice $U$ satisfies the conditions of  Theorem \ref{myresult},  and we conclude that $U$ is an $RG$-permutation module.
    
\end{proof}

\bibliography{ref}

\bibliographystyle{plain} 



\end{document}